\newtheorem{lemma}{Lemma}
\newtheorem{theorem}{Theorem}
\numberwithin{equation}{section}
\begin{document}
\journal{Results in Mathematics}
\begin{frontmatter}
\title{ Approximation by Modified Jain-Baskakov Operators}

\author[label1,label2,label*]{ Vishnu Narayan Mishra}
\ead{vishnunarayanmishra@gmail.com}

\author[label1]{Preeti Sharma}
\ead{preeti.iitan@gmail.com}

\author[label3]{ Marius Birou}
\ead{ marius.birou@math.utcluj.ro}

\address[label1]{Department of Applied Mathematics \& Humanities,
Sardar Vallabhbhai National Institute of Technology, Ichchhanath Mahadev Dumas Road, Surat -395 007 (Gujarat), India}

\address[label2]{L. 1627 Awadh Puri Colony Beniganj, Phase -III, Opposite - Industrial Training Institute (I.T.I.), Ayodhya Main Road Faizabad-224 001, (Uttar Pradesh)}

\address[label3]{Mathematics Department, Technical University of Cluj Napoca, Cluj Napoca, Romania
\\\vspace{0.5cm} \text{\textsf{Dedicated to the 66-th Happy Anniversary of our great Master and friend academician Prof. Dr. Heiner Gonska }}}

\fntext[label*]{Corresponding author}

\begin{abstract}
In the present paper we discuss the approximation properties of Jain-Baskakov operators with parameter $c$. The present paper deals with the modified forms of the Baskakov basis functions. We establish some direct results, which include the asymptotic formula and error estimation in terms of the modulus of continuity and weighted approximation.

\begin{keyword}
Jain operators, Baskakov operators, King-type operators, Weighted approximation.
\MSC{41A25 \sep 41A36.}
\end{keyword}
\end{abstract}

\end{frontmatter}

\section{Introduction}
\indent In 1972, G.C. Jain \cite{jain1972approximation} introduced and studied the following class of positive linear operators
\begin{equation}\label{jaineq1}
P_n^{[\beta]}(f,x) =  \sum_{v=0}^{\infty} \omega_{\beta}(v,n x) f\left(\frac{v}{n}\right),
\end{equation}
where

\begin{equation}\label{jaineq2}
\omega_{\beta}(v,nx) = nx(nx+v\beta)^{v-1}\frac{e^{-(nx+v\beta)}}{v!},
\end{equation}
and $\beta\in [0,1)$, $f\in C(\mathbb{R}_{+})$. In the particular
case $\beta=0$, $P_n^{0}$, $n\in \mathbb{N}$, turns into
well-known Szasz-Mirakjan operators
\cite{szasz1950generalization}. A Kantorovich-type extension of
$P_n^{[\beta]}$ was given in \cite{umar1985approximation}. In the
last few decades several authors contributed to the approximation
theory, fractional differential and integral equations have
recently been applied in various areas of engineering,
mathematics, physics and bio-engineering and other applied
sciences (see \cite{Dubey2008}, \cite{Rassias},
\cite{tarabie2012jain}, \cite{RASA2009}, \cite{RASA2007},
\cite{RASA2009Gonska}, \cite{RASA2014} and references therein).

Very recently Patel and Mishra \cite{patel2015jain} introduced
modified Jain-Baskakov operators as follows
\begin{eqnarray}\label{jaineq3}
\mathcal{D}_{n,c}^{\beta}(f,x)&=& \frac{(n-c)}{c}\sum_{v=1}^{\infty} \omega_{\beta}(v,n x)\int_0^{\infty} p_{n,v-1,c}(t)f(t) dt + e^{-nx}f(0),
\end{eqnarray}
where $\displaystyle p_{n,v-1,c} (t) = c \frac{\Gamma(n/c+v-1)}{\Gamma(v)\Gamma(n/c)}\cdot \frac{(ct)^{v-1}}{(1+ct)^{n/c+v-1}}$ and $\omega_{\beta}(v,nx)$ are as defined in \eqref{jaineq2}.\\
As a special case, i.e., $c=1$, the operators (\ref{jaineq3})
reduce to the Jain-Baskakov operators which is defined in
\cite{patel2015jain}. The operators given by (\ref{jaineq3}) have
different approximation properties than the operators defined for
$c=1$.

The present paper deals with the study of some direct results in
ordinary approximation for the modified Jain-Baskakov operators
with parameter $c$. We study weighted approximation theorems, rate
of convergence of these operators.
% Also, approximation properties of King-type and Stancu-type approach for the operators \eqref{10.1.eq1} are mention. At the end, we establish some results on modified Jain-Baskakov operators with parameter $c$.

\section{Auxiliary Results}

\begin{lemma}\label{jainlemma1}(\cite{jain1972approximation}, \cite{Farcas}) For $P_{n}^{[\beta]} (t^m,x),~~m=0,...,4$, we have
$$ P_{n}^{[\beta]} (1,x)= 1,~~~ P_{n}^{[\beta]} (t,x)= \frac{x}{1-\beta},~~~ P_{n}^{[\beta]} (t^2,x)= \frac{x^2}{(1-\beta)^2}+ \frac{x}{n(1-\beta)^3},$$

\[
P_{n}^{[\beta ]}(t^{3},x)=\frac{x^{3}}{(1-\beta )^{3}}+\frac{3x^{2}}{%
n(1-\beta )^{4}}-\frac{(6\beta ^{4}-6\beta ^{3}-2\beta
-1)x}{n^{2}(1-\beta )^{5}},
\]

\begin{eqnarray*}
P_{n}^{[\beta ]}(t^{4},x) &=&\frac{x^{4}}{(1-\beta )^{4}}+\frac{6x^{3}}{%
n(1-\beta )^{5}}-\frac{(36\beta ^{4}-72\beta ^{3}+36\beta ^{2}-8\beta -7)x^2%
}{n^{2}(1-\beta )^{6}} \\
&&+\frac{(105\beta ^{5}-14\beta ^{4}-2\beta ^{3}+12\beta ^{2}+8\beta +1)x}{%
n^{3}(1-\beta )^{7}}.
\end{eqnarray*}

\end{lemma}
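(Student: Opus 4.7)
The plan is to compute, for each $m = 0, 1, 2, 3, 4$, the raw moment
$$S_m(a) := \sum_{v=0}^{\infty} v^{m}\,\omega_{\beta}(v,a) = \sum_{v=0}^{\infty} v^{m}\,\frac{a(a+v\beta)^{v-1}}{v!}\,e^{-(a+v\beta)},$$
and then set $a = nx$ and divide by $n^{m}$ to obtain $P_{n}^{[\beta]}(t^{m},x) = n^{-m} S_{m}(nx)$. The case $m=0$ is the Abel--Jensen identity $S_{0}(a)=1$ underlying the normalization of the Jain operators, which immediately gives $P_{n}^{[\beta]}(1,x)=1$.

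For $m \geq 1$, I would work from the one-parameter family of Abel-type identities obtained by inserting a marker $z^{v}$ into the basis functions,
$$\sum_{v=0}^{\infty} \frac{a(a+v\beta)^{v-1}}{v!}\,e^{-(a+v\beta)}\,z^{v} = \Phi(a,z;\beta),$$
and then applying the operator $z\,d/dz$ repeatedly, which pulls down factors of $v$ inside the sum. Evaluating at $z = 1$ (where $\Phi(a,1;\beta) = 1$) yields closed forms for $S_{1}, S_{2}, \ldots$ as polynomials in $a$ whose coefficients are rational functions of $(1-\beta)^{-1}$. This reproduces the classical formulas $S_{1}(a) = a/(1-\beta)$ and $S_{2}(a) = a^{2}/(1-\beta)^{2} + a/(1-\beta)^{3}$ of Jain \cite{jain1972approximation}, and the analogous but lengthier expressions for $S_{3}(a)$ and $S_{4}(a)$ given in \cite{Farcas}. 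Substituting $a=nx$ in each case and dividing by $n^{m}$ reproduces the stated expressions.

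The main obstacle is the algebraic bookkeeping in the cases $m=3,4$: the numerators that appear are polynomials in $\beta$ of degree $4$ and $5$, and their signs and coefficients (such as $-(6\beta^{4}-6\beta^{3}-2\beta-1)$ in the $t^{3}$ formula and $105\beta^{5}-14\beta^{4}-2\beta^{3}+12\beta^{2}+8\beta+1$ in the $t^{4}$ formula) must be tracked with care after each application of $z\,d/dz$. I would mitigate this by organizing the recursion in a triangular form, expressing each $S_{m}$ as $\sum_{k=1}^{m} c_{m,k}(\beta)\,a^{k}$ and propagating the $c_{m,k}$ symbolically. Two sanity checks are available: at $\beta \to 0$ the operator degenerates to the Sz\'asz--Mirakjan operator, so the formulas must recover $x$, $x^{2}+x/n$, $x^{3}+3x^{2}/n+x/n^{2}$, $x^{4}+6x^{3}/n+7x^{2}/n^{2}+x/n^{3}$; and at $x=0$, all moments $m\geq 1$ must vanish because $\omega_{\beta}(v,0)=\delta_{v,0}$. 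Both checks are visibly satisfied by the stated expressions, which gives confidence in the computation.
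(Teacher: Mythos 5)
The paper does not actually prove this lemma: it is imported wholesale from the cited sources, with the $m\le 2$ moments taken from \cite{jain1972approximation} and the $m=3,4$ moments from \cite{Farcas}. Your generating-function plan is the standard route to these identities and is sound in principle: with $h=h(z)$ defined implicitly by $h=ze^{\beta(h-1)}$ one has $\Phi(a,z;\beta)=e^{a(h(z)-1)}$, so $h(1)=1$ gives $S_0(a)=1$, and implicit differentiation gives $h'(1)=1/(1-\beta)$, whence $S_1(a)=a/(1-\beta)$ and, after further applications of $z\,d/dz$, the higher moments. Your two sanity checks are correctly stated and do hold: at $\beta=0$ the displayed formulas reduce to the Sz\'asz--Mirakjan moments $x$, $x^2+x/n$, $x^3+3x^2/n+x/n^2$, $x^4+6x^3/n+7x^2/n^2+x/n^3$ (note $-(6\beta^4-6\beta^3-2\beta-1)\to 1$ and $-(36\beta^4-72\beta^3+36\beta^2-8\beta-7)\to 7$), and every term vanishes at $x=0$. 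The one honest caveat is that for $m=3,4$ you do not carry out the coefficient extraction --- which is where essentially all the work lies --- and instead fall back on \cite{Farcas}; as written, your argument is therefore a verification strategy plus a citation rather than a self-contained derivation. Since that is exactly the status of the lemma in the paper itself, this is acceptable, but if a full proof were demanded you would still need to propagate the triangular recursion for $c_{m,k}(\beta)$ through $m=4$ and confirm the degree-$4$ and degree-$5$ polynomial numerators explicitly.
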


\begin{lemma}\label{jainlemma2}
 For $n>5c>0$ , we have
 $$\mathcal{D}_{n,c}^{\beta}(1,x)=1, ~~\mathcal{D}_{n,c}^{\beta}(t,x)= \frac{n   x}{(n-2c)(1-\beta)},$$
\begin{equation*}
\mathcal{D}_{n,c}^{\beta}(t^2,x)= \frac{n^2}{(n-2c)(n-3c)} \left[
\frac{x^2}{(1-\beta )^2}+ \frac{x \left(2-2 \beta +\beta
^2\right)}{n (1-\beta )^3}\right],
\end{equation*}
\[
\mathcal{D}_{n,c}^{\beta }(t^{3},x)=\frac{n^{2}x^{2}(-(1-\beta
)xn+3(\beta ^{2}-2\beta +2))}{(1-\beta
)^{4}(n-2c)(n-3c)(n-4c)}+o\left( \frac{1}{n}\right),
\]%
\[
\mathcal{D}_{n,c}^{\beta }(t^{4},x)=\frac{n^{3}x^{3}((1-\beta
)xn+6(\beta ^{2}-2\beta +2))}{(1-\beta
)^{5}(n-2c)(n-3c)(n-4c)(n-5c)}+o\left( \frac{1}{n}\right).
\]
\end{lemma}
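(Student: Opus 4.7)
The plan is to derive the moments in two stages: first compute the Baskakov-type inner integrals $\int_0^{\infty} p_{n,v-1,c}(t)\,t^m\,dt$ in closed form, then sum against the Jain weights $\omega_\beta(v,nx)$ using the moments catalogued in Lemma~\ref{jainlemma1}. This reduces the whole calculation to straightforward substitutions into a single template.

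For the inner integral I would substitute $u = ct$ in
\[
\int_0^{\infty} p_{n,v-1,c}(t)\,t^m\,dt = c\,\frac{\Gamma(n/c+v-1)}{\Gamma(v)\,\Gamma(n/c)} \int_0^{\infty} \frac{(ct)^{v-1}\,t^m}{(1+ct)^{n/c+v-1}}\,dt,
\]
reducing the integrand to the Beta function $B(v+m,\,n/c-m-1)$, which converges precisely when $n > (m+1)c$; this is the origin of the hypothesis $n > 5c$ needed for $t^4$. After collecting $\Gamma$-factors, the rising factorial $v(v+1)\cdots(v+m-1)$ appears, multiplied by $1/((n/c-1)(n/c-2)\cdots(n/c-m-1))$. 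Multiplying by the external normalizer $(n-c)/c$ from the definition of $\mathcal{D}_{n,c}^{\beta}$ cancels the $(n/c-1)$ factor and leaves the clean prefactor $\prod_{j=2}^{m+1}(n-jc)^{-1}$.

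Next I would observe that the $e^{-nx}f(0)$ term contributes nothing when $f(t)=t^m$ with $m\geq 1$, and that the $v=0$ term in the Jain sum also vanishes since the rising factorial is zero at $v=0$; thus the sum may be silently re-indexed from $v=0$. Expanding the rising factorial as a polynomial in $v$ and using $\sum_{v\geq 0}\omega_\beta(v,nx)\,v^k = n^k P_n^{[\beta]}(t^k,x)$, the moment $\mathcal{D}_{n,c}^{\beta}(t^m,x)$ becomes an explicit linear combination of the Jain moments from Lemma~\ref{jainlemma1}. The cases $m=0,1,2$ yield the stated exact identities after short simplification; for $t^2$ the key step is recognizing $1+(1-\beta)^2 = 2-2\beta+\beta^2$.

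For $m=3,4$ only the top two leading-order contributions survive in the stated asymptotic form, everything else being absorbed into $o(1/n)$ once divided by the prefactor $\prod(n-jc)^{-1} \sim n^{-m}$. The main obstacle will be this power-counting: the leading $x^m$ term from $n^m P_n^{[\beta]}(t^m,x)$ must combine with the next-order contribution $\binom{m}{2}\cdot\text{(first correction of } P_n^{[\beta]}(t^m))$ and with $c_{m-1}\,n^{m-1}P_n^{[\beta]}(t^{m-1})$, where $c_{m-1}$ is the subleading coefficient of the rising factorial ($3$ for $m=3$, $6$ for $m=4$), to reproduce the universal factor $(\beta^2-2\beta+2)$ via the same identity $1+(1-\beta)^2 = \beta^2-2\beta+2$. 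All lower-order rising-factorial contributions contribute at most $O(n^{m-2})$ inside the bracket, hence at most $O(1/n^2)$ after normalization. Past this bookkeeping, the calculation is routine.
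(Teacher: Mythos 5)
Your proposal follows essentially the same route as the paper's own proof: evaluate $\int_0^\infty t^j p_{n,v-1,c}(t)\,dt$ as a Beta integral to obtain $c\,\dfrac{v(v+1)\cdots(v+j-1)}{(n-c)(n-2c)\cdots(n-(j+1)c)}$, expand the rising factorial in powers of $v$, and reduce everything to the Jain moments of Lemma~\ref{jainlemma1}; your added remarks on the convergence condition $n>(j+1)c$, the vanishing of the $e^{-nx}f(0)$ term for $m\ge 1$, and the identity $1+(1-\beta)^2=2-2\beta+\beta^2$ are all correct. The only friction is that carrying out your (correct) power counting for $j=3$ produces a leading term $+(1-\beta)xn$ inside the bracket rather than the $-(1-\beta)xn$ printed in the lemma, which appears to be a sign typo in the statement, not a flaw in your argument.
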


\begin{proof}
For $n>(j+1)c,$ we have
\begin{eqnarray*}
\int_{0}^{\infty }t^{j}p_{n,v-1,c}(t)dt &=&\frac{c\Gamma
(n/c+v-1)}{\Gamma (v)\Gamma (n/c)}\cdot \frac{\Gamma (v+j)\Gamma
(n/c-j-1)}{c^{j+1}\Gamma
(n/c+v-1)} \\
&=&c\frac{(v+j-1)...v}{(n-c)...(n-(j+1)c)}.
\end{eqnarray*}%
It follows that
\[
\mathcal{D}_{n,c}^{\beta
}(1,x)=\frac{n-c}{c}\sum\limits_{n=1}^{\infty }\omega _{\beta
}(v,nx)\cdot \frac{c}{n-c}+e^{-nx}=P_{n}^{[\beta ]}(1,x),
\]%
\begin{eqnarray*}
\mathcal{D}_{n,c}^{\beta }(t,x)
&=&\frac{n-c}{c}\sum\limits_{n=1}^{\infty }\omega
_{\beta }(v,nx)\cdot \frac{cv}{(n-c)(n-2c)} \\
&=&\frac{n}{n-2c}\sum\limits_{n=1}^{\infty }\omega _{\beta }(v,nx)\frac{v}{n}%
=\frac{n}{n-2c}P_{n}^{[\beta ]}(t,x),
\end{eqnarray*}%
\begin{eqnarray*}
\mathcal{D}_{n,c}^{\beta }(t^{2},x)
&=&\frac{n-c}{c}\sum\limits_{n=1}^{\infty }\omega
_{\beta }(v,nx)\cdot \frac{cv(v+1)}{(n-c)(n-2c)(n-3c)} \\
&=&\frac{n^{2}}{(n-2c)(n-3c)}\sum\limits_{n=1}^{\infty }\omega
_{\beta
}(v,nx)\frac{v^{2}+v}{n^{2}} \\
&=&\frac{n^{2}}{(n-2c)(n-3c)}\left( P_{n}^{[\beta ]}(t^{2},x)+\frac{1}{n}%
P_{n}^{[\beta ]}(t,x)\right),
\end{eqnarray*}%
\begin{eqnarray*}
\mathcal{D}_{n,c}^{\beta }(t^{3},x)
&=&\frac{n-c}{c}\sum\limits_{n=1}^{\infty }\omega
_{\beta }(v,nx)\cdot \frac{cv(v+1)(v+2)}{(n-c)(n-2c)(n-3c)(n-4c)} \\
&=&\frac{n^{3}}{(n-2c)(n-3c)(n-4c)}\sum\limits_{n=1}^{\infty
}\omega _{\beta
}(v,nx)\frac{v^{3}+3v^{2}+2v}{n^{3}} \\
&=&\frac{n^{3}}{(n-2c)(n-3c)(n-4c)}\left( P_{n}^{[\beta ]}(t^{3},x)+\frac{3}{%
n}P_{n}^{[\beta ]}(t^{2},x)+\frac{2}{n^{2}}P_{n}^{[\beta
]}(t,x)\right),
\end{eqnarray*}%
\begin{eqnarray*}
\mathcal{D}_{n,c}^{\beta }(t^{4},x)
&=&\frac{n-c}{c}\sum\limits_{n=1}^{\infty }\omega _{\beta
}(v,nx)\cdot
\frac{cv(v+1)(v+2)(v+3)}{(n-c)(n-2c)(n-3c)(n-4c)(n-5c)}
\\
&=&\frac{n^{4}}{(n-2c)(n-3c)(n-4c)(n-5c)}\sum\limits_{n=1}^{\infty
}\omega
_{\beta }(v,nx)\frac{v^{4}+6v^{3}+11v^{2}+6v}{n^{4}} \\
&=&\frac{n^{4}}{(n-2c)(n-3c)(n-4c)(n-5c)}\times \\
&&\times \left( P_{n}^{[\beta ]}(t^{4},x)+\frac{6}{n}P_{n}^{[\beta
]}(t^{3},x)+\frac{11}{n^{2}}P_{n}^{[\beta ]}(t^{2},x)+\frac{6}{n^{3}}%
P_{n}^{[\beta ]}(t,x)\right).
\end{eqnarray*}%
Using Lemma \ref{jainlemma1} we get the conclusion.
\end{proof}

\begin{lemma}\label{jainlemma3}
For $n>5c>0$, we have
\begin{eqnarray*}
\mu_{1,n,c}^{\beta}&=& \mathcal{D}_{n,c}^{\beta}(t-x,x) = \frac{x(n\beta+ 2c(1-\beta))}{(n-2c)(1-\beta)},\\
\mu_{2,n,c}^{\beta}&=& \mathcal{D}_{n,c}^{\beta}((t-x)^2,x)\\
&=& \frac{x^2 \left(n^2 \beta ^2+ n \left(c+4 c \beta -5 c \beta
^2\right) + 6 c^2-12 c^2 \beta +6 c^2 \beta ^2\right)}{(n-2 c)
(n-3 c) (1-\beta )^2} + \frac{n x \left(2-2 \beta +\beta
^2\right)}{(n-2 c) (n-3 c) (1-\beta )^3},\\
\mu _{4,n,c}^{\beta } &=&\mathcal{D}_{n,c}^{\beta }((t-x)^{4},x) \\
&=&\frac{(1-\beta )n^{3}\beta^{2}x^{4}(2c(3+4\beta -7\beta
^{2})+\beta ^{2}n)+6n^{3}\beta ^{2}x^{2}(\beta ^{2}-2\beta
+2)}{(n-2c)(n-3c)(n-4c)(n-5c)(1-\beta )^{5}}+o\left(
\frac{1}{n}\right).
\end{eqnarray*}
\end{lemma}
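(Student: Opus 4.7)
The plan is to exploit linearity of $\mathcal{D}_{n,c}^{\beta}$ together with the moment formulas supplied by Lemma \ref{jainlemma2}. For each $k\in\{1,2,4\}$ I would expand
\[
(t-x)^{k}=\sum_{j=0}^{k}\binom{k}{j}(-x)^{k-j}t^{j},
\]
apply $\mathcal{D}_{n,c}^{\beta}$ term by term, and substitute the closed forms for $\mathcal{D}_{n,c}^{\beta}(t^{j},x)$, $j=0,\dots,4$, from Lemma \ref{jainlemma2}. Since $\mathcal{D}_{n,c}^{\beta}(1,x)=1$, the constant terms collapse immediately.

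First I would handle $\mu_{1,n,c}^{\beta}$: write
\[
\mu_{1,n,c}^{\beta}=\mathcal{D}_{n,c}^{\beta}(t,x)-x=\frac{nx}{(n-2c)(1-\beta)}-x=\frac{x\bigl[n-(n-2c)(1-\beta)\bigr]}{(n-2c)(1-\beta)},
\]
and expand the bracket as $n\beta+2c(1-\beta)$, which already matches the claim. Then I would pass to $\mu_{2,n,c}^{\beta}=\mathcal{D}_{n,c}^{\beta}(t^{2},x)-2x\mathcal{D}_{n,c}^{\beta}(t,x)+x^{2}$. The second-moment expression from Lemma \ref{jainlemma2} has one term proportional to $x^{2}/(1-\beta)^{2}$ and one proportional to $x/[n(1-\beta)^{3}]$; combining with $-2x\cdot\mathcal{D}_{n,c}^{\beta}(t,x)$ and $x^{2}$ over the common denominator $(n-2c)(n-3c)(1-\beta)^{2}$ (respectively $(1-\beta)^{3}$ for the linear-in-$x$ piece) isolates the two terms stated in the lemma.

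For $\mu_{4,n,c}^{\beta}$ the strategy is identical but the bookkeeping is heavier: expand $(t-x)^{4}=t^{4}-4xt^{3}+6x^{2}t^{2}-4x^{3}t+x^{4}$, plug in the four expressions from Lemma \ref{jainlemma2}, and collect by powers of $x$. The leading $x^{4}$ contribution of $\mathcal{D}_{n,c}^{\beta}(t^{4},x)$ is $\frac{n^{4}}{(n-2c)(n-3c)(n-4c)(n-5c)(1-\beta)^{4}}$; subtracting $4x\cdot\frac{n^{3}x^{2}\cdot(-(1-\beta)xn+\cdots)}{(1-\beta)^{4}(n-2c)(n-3c)(n-4c)}$ and the lower-order cancellations should, after reducing to the common denominator $(n-2c)(n-3c)(n-4c)(n-5c)(1-\beta)^{5}$, leave exactly the coefficient $(1-\beta)n^{3}\beta^{2}\bigl(2c(3+4\beta-7\beta^{2})+\beta^{2}n\bigr)$ on $x^{4}$ and $6n^{3}\beta^{2}(\beta^{2}-2\beta+2)$ on $x^{2}$, with all remaining contributions absorbed into the $o(1/n)$ remainder already present in the third- and fourth-moment formulas.

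The only genuine obstacle is the algebraic simplification in the $\mu_{4}$ step: the $x^{4}$-coefficient involves the difference of two rational functions in $n$ whose leading behaviours cancel, so I would expand $(n-kc)$ factors in the denominators as $n(1-kc/n)$, multiply out, and verify that the $x^{3}$ and $x^{1}$ contributions drop out to order $1/n$, confirming the stated absence of such terms. The cubic coefficient in $x$ vanishes because the leading $x^{3}$-parts of $\mathcal{D}_{n,c}^{\beta}(t^{3},x)$ and $\mathcal{D}_{n,c}^{\beta}(t^{4},x)$ were already written with the same normalization, so the cancellation is built into the formulas and needs only to be verified rather than discovered.
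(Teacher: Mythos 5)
Your proposal is correct and follows exactly the route the paper takes: the paper's entire proof of this lemma is the single line ``The proof follows from Lemma \ref{jainlemma2},'' i.e.\ binomial expansion of $(t-x)^k$, linearity, and substitution of the raw moments. Your verification of the $\mu_{1,n,c}^{\beta}$ bracket $n-(n-2c)(1-\beta)=n\beta+2c(1-\beta)$ and your outline of the common-denominator bookkeeping for $\mu_{2}$ and $\mu_{4}$ supply precisely the omitted details.
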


\begin{proof}
The proof follows from Lemma \ref{jainlemma2}.
\end{proof}

\section{Direct theorem}
%Next, we establish a direct local approximation theorem for the Modified Jain-Baskakov operators in ordinary approximation.\\
Let the space $C_B[0,\infty)$ of all continuous and bounded functions be endowed with the norm $\|f\| = \sup\{|f(x)| : x\in [0,\infty)\}$. Further let us consider the following K-functional:
\begin{equation}
\mathcal{K}_2(f,\delta) = \inf_{g \in W^2} \{ \|f-g\| + \delta \|g''\|\},
\end{equation}
where $\delta>0$ and $W^2 = \{g\in C_B[0,\infty) : g', g'' \in C_B[0,\infty)\}.$ By the methods as given in \cite{devore1993constructive}, there exists an absolute constant $M >0$ such that
\begin{equation}\label{9.3.eq1}
\mathcal{K}_2(f,\delta) \leq M \omega_2(f,\sqrt{\delta}),
\end{equation}
 where $$ \omega_2(f,\sqrt{\delta}) = \sup_{0< h\leq \sqrt{\delta}} ~~\sup_{x\in [0,\infty)} |f(x+2h) - 2f(x+h) + f(x) |$$
 is the second order modulus of smoothness of $f\in C_B[0,\infty)$.

\begin{theorem}
For $f\in C_B[0,\infty)$ and $n>3c>0$, we have
\begin{eqnarray*}
| \mathcal{D}_{n,c}^{\beta}(f,x) -f(x) | &\leq& \omega\left(f, \mu_{1,n,c}^{\beta}(x)\right)+ M\omega_2\left(f, \sqrt{\left(\mu_{1,n,c}^{\beta}(x)\right)^2 +\mu_{2,n,c}^{\beta}(x)}\right),\end{eqnarray*}
 where $M$ is a positive constant.
 \end{theorem}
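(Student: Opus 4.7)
The plan is to use the classical Ditzian--DeVore--Lorentz technique based on an auxiliary (Steklov-style) operator that kills the first moment, so that a second-order Taylor expansion gives a clean estimate in terms of the K-functional, which is then converted to $\omega_2$ via \eqref{9.3.eq1}. Concretely, I would define
\[
\widetilde{\mathcal{D}}_{n,c}^{\beta}(f,x) \;=\; \mathcal{D}_{n,c}^{\beta}(f,x) + f(x) - f\bigl(x+\mu_{1,n,c}^{\beta}(x)\bigr),
\]
so that $\widetilde{\mathcal{D}}_{n,c}^{\beta}(1,x)=1$ and, by Lemma~\ref{jainlemma3}, $\widetilde{\mathcal{D}}_{n,c}^{\beta}(t-x,x)=0$. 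In particular $\widetilde{\mathcal{D}}_{n,c}^{\beta}$ is linear, bounded by $\|\widetilde{\mathcal{D}}_{n,c}^{\beta}(f,\cdot)\|\le 3\|f\|$, and reproduces linear functions.

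For $g\in W^2$ I would apply the integral form of Taylor's formula $g(t)=g(x)+(t-x)g'(x)+\int_x^t(t-u)g''(u)\,du$, act with $\widetilde{\mathcal{D}}_{n,c}^{\beta}$, and use the vanishing of the first moment to obtain
\[
\widetilde{\mathcal{D}}_{n,c}^{\beta}(g,x) - g(x) \;=\; \widetilde{\mathcal{D}}_{n,c}^{\beta}\!\left(\int_x^t(t-u)g''(u)\,du,\;x\right).
\]
Bounding $\bigl|\int_x^t(t-u)g''(u)\,du\bigr|\le \tfrac{1}{2}\|g''\|(t-x)^2$ and splitting the auxiliary operator into its two constituent pieces produces
\[
|\widetilde{\mathcal{D}}_{n,c}^{\beta}(g,x)-g(x)| \;\le\; \tfrac{1}{2}\|g''\|\bigl[\mu_{2,n,c}^{\beta}(x) + (\mu_{1,n,c}^{\beta}(x))^2\bigr],
\]
where the first summand comes from $\mathcal{D}_{n,c}^{\beta}((t-x)^2,x)=\mu_{2,n,c}^{\beta}(x)$ of Lemma~\ref{jainlemma3}, and the second from evaluating the remainder at $x+\mu_{1,n,c}^{\beta}(x)$.

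Next, for an arbitrary $f\in C_B[0,\infty)$ I would write $f=(f-g)+g$, giving
\[
|\widetilde{\mathcal{D}}_{n,c}^{\beta}(f,x)-f(x)| \;\le\; 2\|f-g\| + \tfrac{1}{2}\|g''\|\bigl[(\mu_{1,n,c}^{\beta}(x))^2+\mu_{2,n,c}^{\beta}(x)\bigr],
\]
after which taking the infimum over $g\in W^2$ yields a bound by a constant multiple of $\mathcal{K}_2\bigl(f,\;(\mu_{1,n,c}^{\beta}(x))^2+\mu_{2,n,c}^{\beta}(x)\bigr)$, and then \eqref{9.3.eq1} converts this to $M\,\omega_2\!\bigl(f,\sqrt{(\mu_{1,n,c}^{\beta}(x))^2+\mu_{2,n,c}^{\beta}(x)}\bigr)$. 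Finally I would remove the ``tilde'' by the triangle inequality
\[
|\mathcal{D}_{n,c}^{\beta}(f,x)-f(x)| \;\le\; |\widetilde{\mathcal{D}}_{n,c}^{\beta}(f,x)-f(x)| + \bigl|f(x+\mu_{1,n,c}^{\beta}(x))-f(x)\bigr|,
\]
and estimate the last term by $\omega(f,\mu_{1,n,c}^{\beta}(x))$ directly from the definition of the first-order modulus of continuity.

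The only mildly technical step is verifying $\widetilde{\mathcal{D}}_{n,c}^{\beta}(t-x,x)=0$ and tracking the constants so that the final inequality comes out with a single positive constant $M$ as stated; both are routine given Lemmas~\ref{jainlemma2}--\ref{jainlemma3}. No obstacle of substance is expected: the argument is entirely algorithmic once the auxiliary operator is chosen to annihilate the first central moment.
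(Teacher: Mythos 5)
Your proposal is correct and follows essentially the same route as the paper: the paper's auxiliary operator $\hat{\mathcal{D}}_{n,c}^{\beta}$ is exactly your $\widetilde{\mathcal{D}}_{n,c}^{\beta}$ with $\mu_{1,n,c}^{\beta}(x)$ written out explicitly as $\frac{(2c-2c\beta+n\beta)x}{(n-2c)(1-\beta)}$, and the Taylor-with-integral-remainder, K-functional, and final triangle-inequality steps coincide. The only differences are cosmetic constant-tracking (your sharper factor $\tfrac12$ on the remainder, and the factor absorbed into $M$), which do not affect the stated result.
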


\begin{proof}
We are introducing the auxiliary operators as follows
$$\hat{\mathcal{D}}_{n,c}^{\beta}(f,x)  = \mathcal{D}_{n,c}^{\beta}(f,x)- f\left(x+ \frac{(2c-2 c\beta +n \beta)x }{(n -  2 c) (1-\beta )} \right) +f(x).$$
 Let $g\in W_{\infty}^2$ and $x,t \in [0,\infty)$. By Taylor's expansion we have
 $$g(t) = g(x) + (t-x) g'(x) + \int_x^t (t-u)g''(u) du.$$
Applying $\hat{\mathcal{D}}_{n,c}^{\beta}$, we get
 $$\hat{\mathcal{D}}_{n,c}^{\beta}(g,x) - g(x) = g'(x) \hat{\mathcal{D}}_{n,c}^{\beta}((t-x),x) +  \hat{\mathcal{D}}_{n,c}^{\beta}\left(\int_x^t (t-u)g''(u) du,x\right).$$
% Applying Lemma \ref{10.2.lemma1}, we get
Now,
\begin{eqnarray*}
 |\hat{\mathcal{D}}_{n,c}^{\beta}(g,x) - g(x)| &\leq & \hat{\mathcal{D}}_{n,c}^{\beta}\left(\bigg|\int_x^t (t-u)g''(u) du\bigg|,x\right)\\
 &\leq & \mathcal{D}_{n,c}^{\beta}\left((t-x)^2,x\right)\|g''\| \\
 &&+ \bigg|\int_x^{x+\frac{(2c -2 c \beta +n \beta)x }{(n-2c) (1-\beta )}}\left(x+\frac{(2c-2c \beta +n \beta)x }{(n-2c) (1-\beta )}-u\right)g''(u) du\bigg|\\
 &\leq & \left[\mu_{2,n,c}^{\beta}(x) +\left(\frac{(2c-2c \beta +n \beta)x }{(n-2c) (1-\beta )}\right)^2\right]\|g''\|.
 \end{eqnarray*}
 Since, $$|\mathcal{D}_{n,c}^{\beta}(f,x)|\leq (n-c)/c \sum_{v=1}^{\infty} w_{\beta}(v,nx) \int_0^{\infty} p_{n,v-1,c}(t)| f(t)| dt + e^{-nx} |f(0)| \leq  \|f\|,$$

 \begin{eqnarray*}
 |\mathcal{D}_{n,c}^{\beta}(f,x) -f(x)| &\leq& |\hat{\mathcal{D}}_{n,c}^{\beta}(f-g,x)-(f-g)(x)| + |\hat{\mathcal{D}}_{n,c}^{\beta}(g,x)-g(x)|\\
  &&+ \bigg|\left(x+\frac{(2c-2c \beta +n \beta)x }{(n-2c) (1-\beta )}\right)-f(x)\bigg|\\
 &\leq& 2\|f-g\| +\left[ \mu_{2,n,c}^{\beta}(x) +\left(\frac{(2 c-2c \beta +n \beta)x }{(n-2c) (1-\beta )}\right)^2 \right]\|g''\| \\
 &&+ \omega\left( f, \frac{(2c-2 c\beta +n \beta)x }{(n-2c) (1-\beta )} \right).
\end{eqnarray*}
Taking infimum overall $g\in W^2$, we get
\begin{eqnarray*}
|\mathcal{D}_{n,c}^{\beta}(f,x) -f(x)| &\leq & \mathcal{K}_2\left(f, \mu_{2,n,c}^{\beta}(x) +\left(\frac{(2c-2c \beta +n \beta)x }{(n-2c) (1-\beta )}\right)^2 \right) \\ &&+\omega\left( f, \frac{(2c-2c \beta +n \beta)x }{(n-2c) (1-\beta )} \right).
\end{eqnarray*}
In view of \eqref{9.3.eq1}, we have
\begin{eqnarray*}
|\mathcal{D}_{n,c}^{\beta}(f,x) -f(x)| &\leq& M\omega_2\left(f, \sqrt{\mu_{2,n,c}^{\beta}(x) +\left(\frac{(2c-2c \beta +n \beta)x }{(n-2c) (1-\beta )}\right)^2} \right) \\ &&+\omega\left( f, \frac{(2 c-2c \beta +n \beta)x }{(n-2c) (1-\beta )} \right),
\end{eqnarray*}
which proves the Theorem.
\end{proof}

\subsection{Rate of Convergence}
Let $B_{\rho_0}(\mathbb{R}^{+})$ be the set of all functions $f$ defined on $\mathbb{R}^{+}$ satisfying the condition $|f(x)| \leq M_f (1+  x^2)$, where $M_f$ is a constant depending only on $f$. Now, $ C_{\rho_0}(\mathbb{R}^{+})$ can be represented as\\
  $\{ f\in B_{\rho_0}(\mathbb{R}^{+}) : f  \text{ is continuous and }   \frac{f(x)}{\rho_{0}(x)} \text{ is convergent as } x \to \infty \}$. For any positive $a$, by
$$\omega_a(f,\delta) = \sup_{|t-x|\leq \delta}~\sup_{x,t\in [0,a]} |f(t)-f(x)|$$
we denote the usual modulus of continuity of $f$ on the closed interval $[0,a]$. We know that for a function $f\in C_{\rho_0}(\mathbb{R}^{+})$, the modulus of the continuity $\omega_a(f,\delta)$ tends to zero.\\
Now, we give a rate of convergence theorem for the Modified Jain-Baskakov operators.

\begin{theorem}\label{10.3.1.thm1}
Let $f\in C_{\rho_0}(\mathbb{R}^{+})$ and $\omega_{a+1}(f,\delta)$
be its modulus of continuity on the finite interval $[0,a+1]
\subset \mathbb{R^+}$, where $a>0$. Then for $n>3c$,
$$ \|\mathcal{D}_{n,c}^{\beta}(f)-f\|_{C[0,a]} \leq 6M_f(1+a^2)\mu_{2,n,c}^{\beta}(x) +2 \omega_{a+1}\left(f,\sqrt{\mu_{2,n,c}^{\beta}(x)}\right).$$
\end{theorem}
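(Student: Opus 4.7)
The plan is to exploit the positivity of $\mathcal{D}_{n,c}^{\beta}$ together with the normalization $\mathcal{D}_{n,c}^{\beta}(1,x)=1$ from Lemma \ref{jainlemma2}, which immediately gives
\[
|\mathcal{D}_{n,c}^{\beta}(f,x)-f(x)|\le \mathcal{D}_{n,c}^{\beta}(|f(t)-f(x)|,x).
\]
Thus the whole proof reduces to a pointwise estimate on $|f(t)-f(x)|$ for $x\in[0,a]$ and $t\ge 0$, to be split according to whether $t$ lies in the finite window $[0,a+1]$ or outside it. This is the classical bifurcation technique for extending a locally defined modulus of continuity to an unbounded domain where only a polynomial growth bound is available.

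For $t\in[0,a+1]$ both arguments sit inside the interval on which $\omega_{a+1}$ is defined, so I would use the standard modulus inequality
\[
|f(t)-f(x)|\le \omega_{a+1}(f,|t-x|)\le \Bigl(1+\frac{|t-x|}{\delta}\Bigr)\omega_{a+1}(f,\delta),\qquad \delta>0.
\]
For $t>a+1$ one has $|t-x|>1$; here I would invoke the growth condition $|f(u)|\le M_f(1+u^2)$ and combine it with $t^2\le 2x^2+2(t-x)^2$ and $1\le (t-x)^2$ to obtain an upper bound of the form $6M_f(1+a^2)(t-x)^2$. This is the source of the leading coefficient appearing in the statement.

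Next I would apply $\mathcal{D}_{n,c}^{\beta}(\cdot,x)$ to the combined pointwise bound. The second moment is supplied directly by Lemma \ref{jainlemma3}, namely $\mathcal{D}_{n,c}^{\beta}((t-x)^2,x)=\mu_{2,n,c}^{\beta}(x)$, and the Cauchy--Schwarz inequality gives
\[
\mathcal{D}_{n,c}^{\beta}(|t-x|,x)\le \sqrt{\mu_{2,n,c}^{\beta}(x)}.
\]
Choosing $\delta=\sqrt{\mu_{2,n,c}^{\beta}(x)}$ makes the factor $1+|t-x|/\delta$ contribute at most $2$ on average after applying the operator, producing precisely the term $2\omega_{a+1}(f,\sqrt{\mu_{2,n,c}^{\beta}(x)})$. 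Taking the supremum over $x\in[0,a]$ yields the stated estimate.

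I do not expect a genuine obstacle: the moment computation has already been handled in Lemma \ref{jainlemma3}, and the bifurcation argument is standard. The only delicate point is calibrating the constant in the tail region so that the coefficient of $(t-x)^2$ does not exceed $6M_f(1+a^2)$; this amounts to careful bookkeeping in combining the inequalities $t^2\le 2x^2+2(t-x)^2$, $x\le a$, and $(t-x)^2\ge 1$, and is where the final constant in the theorem is ultimately fixed.
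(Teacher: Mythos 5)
Your proposal is correct and follows essentially the same route as the paper's own proof: the same split at $t=a+1$, the same growth estimate yielding the constant $6M_f(1+a^2)$ via $t^2\le 2x^2+2(t-x)^2$ and $(t-x)^2>1$, the same application of the Cauchy--Schwarz inequality to $\mathcal{D}_{n,c}^{\beta}(|t-x|,x)$, and the same choice $\delta=\sqrt{\mu_{2,n,c}^{\beta}(x)}$. There is nothing to add.
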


\begin{proof}
For $x\in [0,a]$ and $t>a+1$, since $t-x>1$, we have
\begin{eqnarray}\label{9.4.eq1}
|f(t)-f(x)| &\leq& M_f(2+x^2+t^2)\nonumber\\
&\leq & M_f\left(2+ 3x^2 + 2(t-x)^2\right)\nonumber\\
&\leq & 6M_f(1+a^2)(t-x)^2.
\end{eqnarray}
For $x\in [0,a]$ and $t\leq a+1$, we have
\begin{equation}\label{9.4.eq2}
|f(x)-f(t)| \leq \omega_{a+1}(f,|t-x|) \leq \left(1+ \frac{|t-x|}{\delta}\right)\omega_{a+1}(f,\delta)
\end{equation}
with $\delta>0$.\\
From \eqref{9.4.eq1} and \eqref{9.4.eq2} we can write
\begin{equation} |f(t)-f(x)|\leq 6M_f(1+a^2)(t-x)^2 + \left(1+ \frac{|t-x|}{\delta}\right)\omega_{a+1}(f,\delta),\end{equation}
for $x\in [0,a]$ and $t\geq 0$. Thus
\begin{eqnarray*}
|\mathcal{D}_{n,c}^{\beta}(f,x)-f(x)| &\leq& \mathcal{D}_{n,c}^{\beta}\left(|f(t)-f(x)|,x\right)\\
&\leq & 6M_f(1+a^2) \mathcal{D}_{n,c}^{\beta}((t-x)^2,x)\\
&&+ \omega_{a+1}(f,\delta)\left(1+ \frac{1}{\delta}\mathcal{D}_{n,c}^{\beta}((t-x)^2,x)^{1/2}\right).
\end{eqnarray*}
Hence, by Schwarz's inequality and Lemma \ref{jainlemma3}, for $x\in [0,a]$
\begin{eqnarray*}
|\mathcal{D}_{n,c}^{\beta}(f,x)-f(x)|\leq 6M_f(1+a^2)
\mu_{2,n,c}^{\beta}(x) + \omega_{a+1}(f,\delta)\left(1+
\frac{1}{\delta}\sqrt{\mu_{2,n,c}^{\beta}(x)}\right).\end{eqnarray*}
By taking $\delta= \sqrt{\mu_{2,n,c}^{\beta}(x)}$, we get
$$\|\mathcal{D}_{n,c}^{\beta}(f)-f\|_{C[0,a]} \leq 6M_f(1+a^2)\mu_{2,n,c}^{\beta}(x) +
2\omega_{a+1}\left(f,\sqrt{\mu_{2,n,c}^{\beta}(x)}\right).$$ This
completes the proof of Theorem.
\end{proof}

\subsection{Weighted approximation}
Now we shall discuss the weighted approximation theorem, where the approximation formula holds true on the interval $\mathbb{R^+}$.
\begin{theorem}\label{6.5.thm5.1}
If $f\in C_{\rho_{0}}(\mathbb{R}^{+})$, $\displaystyle \lim_{n\to
\infty} \beta_n =0$ and $n>3c$. Then,  $$\lim_{n\to \infty}
\|\mathcal{D}_{n,c}^{\beta_n}(f)-f\|_{\rho_0} = 0.$$
\end{theorem}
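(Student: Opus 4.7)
The plan is to invoke the weighted Korovkin-type theorem of Gadjiev, which reduces the verification of weighted convergence $\|\mathcal{D}_{n,c}^{\beta_n}(f)-f\|_{\rho_0}\to 0$ for every $f\in C_{\rho_0}(\mathbb{R}^+)$ to checking the three test-function limits
\[
\lim_{n\to\infty}\bigl\|\mathcal{D}_{n,c}^{\beta_n}(t^{\nu},x)-x^{\nu}\bigr\|_{\rho_0}=0,\qquad \nu=0,1,2,
\]
where $\|\cdot\|_{\rho_0}=\sup_{x\ge 0}|\cdot|/(1+x^2)$. Since the operators $\mathcal{D}_{n,c}^{\beta_n}$ are manifestly positive and linear, and since Lemma \ref{jainlemma2} shows they map $C_{\rho_0}(\mathbb{R}^+)$ into $B_{\rho_0}(\mathbb{R}^+)$ (the moments up to order $2$ are polynomials of degree $\le 2$ in $x$), the Gadjiev framework applies.

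The case $\nu=0$ is trivial since $\mathcal{D}_{n,c}^{\beta_n}(1,x)=1$. For $\nu=1$, Lemma \ref{jainlemma2} gives
\[
\mathcal{D}_{n,c}^{\beta_n}(t,x)-x=x\left[\frac{n}{(n-2c)(1-\beta_n)}-1\right]=x\cdot\frac{2c+n\beta_n-2c\beta_n}{(n-2c)(1-\beta_n)}.
\]
Dividing by $1+x^2$ and using $\sup_{x\ge 0}x/(1+x^2)=1/2$, the weighted norm is bounded by the absolute value of the $x$-independent coefficient, which tends to $0$ as $n\to\infty$ and $\beta_n\to 0$. For $\nu=2$, I would again expand $\mathcal{D}_{n,c}^{\beta_n}(t^2,x)-x^2$ using Lemma \ref{jainlemma2}, collecting the $x^2$-coefficient (which behaves like $n^2/[(n-2c)(n-3c)(1-\beta_n)^2]-1\to 0$) and the lower-order term (of order $1/n$), and bounding the result by the sum of the moduli of these two coefficients, since $x^2/(1+x^2)\le 1$ and $x/(1+x^2)\le 1/2$. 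Both coefficients tend to $0$ under the hypotheses $n\to\infty$ and $\beta_n\to 0$.

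Once these three test-function convergences are established, Gadjiev's theorem yields the desired conclusion for every $f\in C_{\rho_0}(\mathbb{R}^+)$. The only non-routine step is the $\nu=2$ computation: one must verify carefully that the coefficient of $x^2$ in $\mathcal{D}_{n,c}^{\beta_n}(t^2,x)$, namely $n^2/[(n-2c)(n-3c)(1-\beta_n)^2]$, converges to $1$, which does hold because both the rational factor in $n$ and the factor $(1-\beta_n)^{-2}$ converge to $1$. Beyond this, the proof is a direct substitution using the moment formulas already proved in Lemma \ref{jainlemma2}, so I do not anticipate any serious obstacle; the main subtlety is simply keeping track of which terms are controlled by the $\beta_n\to 0$ hypothesis versus those controlled by $n\to\infty$.
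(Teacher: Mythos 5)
Your proposal is correct and follows essentially the same route as the paper: both invoke Gadjiev's weighted Korovkin theorem and verify the three test-function limits for $t^0, t^1, t^2$ using the moment formulas of Lemma \ref{jainlemma2}, bounding the weighted norms via $\sup_{x\ge 0}x/(1+x^2)$ and $\sup_{x\ge 0}x^2/(1+x^2)$. No substantive difference.
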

\begin{proof}
Using the theorem in \cite{gadjiev1976theorems} we see that it is sufficient to verify the following three conditions
\begin{equation}\label{9.5.eq1}
\lim_{n\to \infty} \|\mathcal{D}_{n,c}^{\beta_n}(t^r,x)-x^r\|_{\rho_0} = 0, ~~ r=0,1,2.
\end{equation}
Since $\mathcal{D}_{n,c}^{\beta_n}(1,x)=1$, the first condition of \eqref{9.5.eq1} is satisfied for $r=0.$\\
Now, for $n>2 c$
\begin{eqnarray*}
\|\mathcal{D}_{n,c}^{\beta_n}(t,x)-x\|_{\rho_0}&=& \sup_{x\in [0,\infty)} \frac{|\mathcal{D}_{n,c}^{\beta_n}(t,x)-x|}{1+x^2}\\
&\leq& \bigg|\frac{n}{(n-2c)(1-\beta_n)}-1\bigg|\sup_{x\in [0,\infty)} \frac{x}{1+x^2} \\
&\leq& \frac{2 c}{(n-2c)} + \frac{n \beta_n }{(n-2c) (1-\beta_n)}
\end{eqnarray*}
which implies $\|\mathcal{D}_{n,c}^{\beta_n}(t,x)-x\|_{\rho_0}=0$ as $n\to \infty$ with $\beta_n \to 0$.\\
Similarly, we can write for $n>3c$
\begin{eqnarray*}
\|\mathcal{D}_{n,c}^{\beta_n}(t^2,x)-x^2\|_{\rho_0}&=& \sup_{x\in [0,\infty)} \frac{|\mathcal{D}_{n,c}^{\beta_n}(t^2,x)-x^2|}{1+x^2}\\
&\leq& \bigg| \frac{n^2}{(n-2 c)(n-3 c)(1-\beta_n )^2}-1\bigg|\sup_{x\in [0,\infty)}\frac{x^2}{1+x^2}\\
&&+\bigg|\frac{n\left(2-2 \beta_n +\beta_n ^2\right)}{(n-2c)(n-3c) (1-\beta_n )^3}\bigg|\sup_{x\in [0,\infty)}\frac{x}{1+x^2}\\
%&\leq& \bigg|\frac{n^2  \beta^2_n}{(n-3c) (n-2c) (1-\beta_n )^2}\bigg|+\bigg|\frac{n(1+5 \beta_n )}{(n-3c) (n-2c) (1-\beta_n )}\bigg|\\
%&&+\bigg|\frac{6}{(n-3c) (n-2c)}  \bigg|+\bigg|\frac{n\left(2-2 \beta_n +\beta_n ^2\right)}{(n-2c)(n-3c) (1-\beta_n )^3}\bigg|,
\end{eqnarray*}
which implies that
$$\lim_{n\to \infty} \|\mathcal{D}_{n,c}^{\beta_n}(t^2,x)-x^2\|_{\rho_0} =0~~ \text{with} ~~\beta_n \to 0. $$
Thus the proof is completed.
\end{proof}

We give the following theorem to approximate all function in $C_{\rho_0}(\mathbb{R}^{+})$. This type of results are given in \cite{gadjiev2003korovkin} for locally integrable functions.
\begin{theorem}
Let $\beta_n\to 0$ as $n \to \infty$. For each $f\in C_{\rho_0}(\mathbb{R}^{+})$ and $\lambda>0$, we have
$$\lim_{n\to \infty}~~ \sup_{x\in [0,\infty)} \frac{|\mathcal{D}_{n,c}^{\beta_n}(f,x) -f(x)|}{(1+x^2)^{1+\lambda}}=0.$$
\end{theorem}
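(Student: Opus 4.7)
The plan is the standard weighted-approximation splitting argument: cut $[0,\infty)$ into a compact piece $[0,x_{0}]$ and a tail $(x_{0},\infty)$, handle the compact piece with the previous weighted theorem (Theorem \ref{6.5.thm5.1}), and handle the tail by exploiting the extra factor $(1+x^{2})^{\lambda}$ in the denominator. Fix $\varepsilon>0$. For any $x_{0}>0$ write
\[
\sup_{x\in[0,\infty)}\frac{|\mathcal{D}_{n,c}^{\beta_{n}}(f,x)-f(x)|}{(1+x^{2})^{1+\lambda}}
\ \leq\ \sup_{x\in[0,x_{0}]}\frac{|\mathcal{D}_{n,c}^{\beta_{n}}(f,x)-f(x)|}{1+x^{2}}
\ +\ \sup_{x>x_{0}}\frac{|\mathcal{D}_{n,c}^{\beta_{n}}(f,x)|+|f(x)|}{(1+x^{2})^{1+\lambda}}.
\]

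For the compact part I would bound the weight by $1$ (since $(1+x^{2})^{\lambda}\geq 1$) and apply Theorem \ref{6.5.thm5.1} directly: the first supremum is dominated by $\|\mathcal{D}_{n,c}^{\beta_{n}}(f)-f\|_{\rho_{0}}$, which tends to $0$ as $n\to\infty$. The term $|f(x)|/(1+x^{2})^{1+\lambda}$ in the tail is $\leq M_{f}/(1+x^{2})^{\lambda}$, which can be made smaller than $\varepsilon/3$ by choosing $x_{0}$ large, uniformly in $n$.

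The main step is to show that the tail of $\mathcal{D}_{n,c}^{\beta_{n}}(f,x)$ is also uniformly small. From the growth bound $|f(t)|\leq M_{f}(1+t^{2})$ and positivity of the operators we get
\[
|\mathcal{D}_{n,c}^{\beta_{n}}(f,x)|\ \leq\ M_{f}\bigl(1+\mathcal{D}_{n,c}^{\beta_{n}}(t^{2},x)\bigr).
\]
Using the explicit formula from Lemma \ref{jainlemma2} for $\mathcal{D}_{n,c}^{\beta_{n}}(t^{2},x)$, together with $\beta_{n}\to 0$ and $n>3c$, I observe that the coefficient $n^{2}/[(n-2c)(n-3c)(1-\beta_{n})^{2}]$ converges to $1$ and the coefficient of the linear term is $O(1/n)$. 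Consequently there exists a constant $K$ (independent of $n$, for all $n$ sufficiently large) such that
\[
\mathcal{D}_{n,c}^{\beta_{n}}(t^{2},x)\ \leq\ K(1+x^{2}), \qquad x\geq 0.
\]
Hence $|\mathcal{D}_{n,c}^{\beta_{n}}(f,x)|/(1+x^{2})^{1+\lambda}\leq M_{f}(1+K)/(1+x^{2})^{\lambda}$, which is $<\varepsilon/3$ for $x>x_{0}$ with $x_{0}$ large, uniformly in $n$.

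Combining the three contributions, for $x_{0}$ chosen first and then $n$ large, the total supremum is less than $\varepsilon$, proving the claim. The only delicate part is verifying the uniform-in-$n$ growth estimate $\mathcal{D}_{n,c}^{\beta_{n}}(t^{2},x)\leq K(1+x^{2})$; everything else reduces to quoting Theorem \ref{6.5.thm5.1} and to the elementary decay $(1+x^{2})^{-\lambda}\to 0$.
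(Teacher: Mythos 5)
Your proof is correct and follows essentially the same route as the paper: split at a point $x_0$, control the compact piece by an earlier convergence theorem, and absorb the tail using the growth bound $\mathcal{D}_{n,c}^{\beta_n}(1+t^2,x)=O(1+x^2)$ together with the extra decay coming from the factor $(1+x^2)^{\lambda}$. The only substantive difference is that the paper invokes Theorem \ref{10.3.1.thm1} (the $\|\cdot\|_{C[0,x_0]}$ rate estimate) on the compact piece where you invoke Theorem \ref{6.5.thm5.1}; both work, and since $(1+x^2)^{1+\lambda}\ge 1+x^2$ your choice in fact makes the whole statement an almost immediate corollary of Theorem \ref{6.5.thm5.1} for $f\in C_{\rho_0}(\mathbb{R}^{+})$. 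Your bookkeeping of the quantifiers (fix $x_0$ large so the tail terms are small uniformly in $n$, then let $n\to\infty$) is more careful than the paper's, which loosely asserts that the middle term tends to $0$ as $n\to\infty$ for fixed $x_0$; what is actually needed, and what you establish, is the uniform-in-$n$ smallness for $x_0$ large.
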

\begin{proof}
For any fixed $x_0>0$,
\begin{eqnarray*}
\sup_{x\in [0,\infty)} \frac{|\mathcal{D}_{n,c}^{\beta_n}(f,x) -f(x)|}{(1+x^2)^{1+\lambda}} &\leq& \sup_{x\leq x_0} \frac{|\mathcal{D}_{n,c}^{\beta_n}(f,x) -f(x)|}{(1+x^2)^{1+\lambda}}+\sup_{x\geq x_0} \frac{|\mathcal{D}_{n,c}^{\beta_n}(f,x) -f(x)|}{(1+x^2)^{1+\lambda}}\\
&\leq & \|\mathcal{D}_{n,c}^{\beta_n} (f) -f\|_{C[0,x_0]}\\
 &&+ \|f\|_{\rho_0} \sup_{x\geq x_0} \frac{|\mathcal{D}_{n,c}^{\beta_n}(1+t^2,x)|}{(1+x^2)^{1+\lambda}} + \sup_{x\geq x_0} \frac{|f(x)|}{(1+x^2)^{1+\lambda}}.
\end{eqnarray*}
The first term of the above inequality tends to zero from Theorem \ref{10.3.1.thm1}. By Lemma \ref{jainlemma2} for any fixed $x_0>0$ it is  easily seen that
$\displaystyle \sup_{x\geq x_0} \frac{|\mathcal{D}_{n,c}^{\beta_n}(1+t^2,x)|}{(1+x^2)^{1+\lambda}} \to 0$ as $n\to \infty$ with $\beta_n \to 0$. We can choose $x_0>0$ so large that the last part of the above inequality can be made small enough.\\
Thus the proof is completed.
\end{proof}

\begin{theorem} Let $b>0$ and $\beta _{n}\in (0,1)$ such that $n\beta _{n}\rightarrow l\in
%TCIMACRO{\U{211d} }%
%BeginExpansion
\mathbb{R}
%EndExpansion
$ as $n\rightarrow \infty .$

Then for every $f\in C[0,b],$ twice differentiable at a fixed
point $x\in (0,b),$ we have
\[
\lim_{n\rightarrow \infty }n(D_{n,c}^{\beta
_{n}}(f,x)-f(x))=x(l+2c)f^{\prime }(x)+\frac{x(2+xc)}{2}f^{\prime
\prime }(x).
\]%
.
\end{theorem}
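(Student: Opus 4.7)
The plan is the classical Voronovskaya-type argument, built on a pointwise Taylor expansion together with the moment asymptotics from Lemma \ref{jainlemma3}. Since $f$ is twice differentiable at $x$, one may write
\[
 f(t)=f(x)+f'(x)(t-x)+\tfrac{1}{2}f''(x)(t-x)^{2}+\varepsilon(t,x)(t-x)^{2},
\]
where $\varepsilon(\cdot,x)$ is bounded and $\varepsilon(t,x)\to 0$ as $t\to x$. Since $\mathcal{D}_{n,c}^{\beta_{n}}$ integrates over $[0,\infty)$ while $f$ is given only on $[0,b]$, I tacitly extend $f$ to a bounded continuous function on $[0,\infty)$; this is harmless because $\beta_{n}\to 0$ forces the operator mass to concentrate around $x$.

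Applying $\mathcal{D}_{n,c}^{\beta_{n}}$ termwise, using linearity and $\mathcal{D}_{n,c}^{\beta_{n}}(1,x)=1$, and multiplying by $n$ yields
\[
 n\bigl(\mathcal{D}_{n,c}^{\beta_{n}}(f,x)-f(x)\bigr)=f'(x)\cdot n\mu_{1,n,c}^{\beta_{n}}(x)+\tfrac{1}{2}f''(x)\cdot n\mu_{2,n,c}^{\beta_{n}}(x)+n\,\mathcal{D}_{n,c}^{\beta_{n}}\!\bigl(\varepsilon(t,x)(t-x)^{2},x\bigr).
\]
Substituting the closed forms from Lemma \ref{jainlemma3} and tracking the dominant orders in $n$ and $\beta_{n}$ under the scaling $\beta_{n}\to 0$, $n\beta_{n}\to l$, a routine but careful bookkeeping gives
\[
 n\mu_{1,n,c}^{\beta_{n}}(x)\longrightarrow x(l+2c),\qquad n\mu_{2,n,c}^{\beta_{n}}(x)\longrightarrow x(2+cx),
\]
which already produces the right-hand side of the claimed identity; the key observations are $n^{2}\beta_{n}^{2}\to l^{2}$, $n\beta_{n}^{2}\to 0$, and $(n-jc)^{-1}=n^{-1}(1+o(1))$.

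The main obstacle is to show that the remainder $n\,\mathcal{D}_{n,c}^{\beta_{n}}(\varepsilon(t,x)(t-x)^{2},x)$ vanishes in the limit. For this I would invoke Cauchy--Schwarz,
\[
 \bigl|\mathcal{D}_{n,c}^{\beta_{n}}\!\bigl(\varepsilon(t,x)(t-x)^{2},x\bigr)\bigr|\le \sqrt{\mathcal{D}_{n,c}^{\beta_{n}}\bigl(\varepsilon(t,x)^{2},x\bigr)}\;\sqrt{\mu_{4,n,c}^{\beta_{n}}(x)},
\]
combined with the fact that Lemma \ref{jainlemma3}, under the parameter scaling, yields $\mu_{4,n,c}^{\beta_{n}}(x)=O(n^{-2})$, so that $n\sqrt{\mu_{4,n,c}^{\beta_{n}}(x)}$ stays bounded. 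It then remains to show that $\mathcal{D}_{n,c}^{\beta_{n}}(\varepsilon(t,x)^{2},x)\to 0$: given $\eta>0$, choose $\delta>0$ so that $|\varepsilon(t,x)|<\eta$ whenever $|t-x|<\delta$, and split the operator into the regions $|t-x|<\delta$ (where $\varepsilon^{2}\le\eta^{2}$, giving contribution at most $\eta^{2}$) and $|t-x|\ge\delta$ (where $\varepsilon^{2}$ is uniformly bounded, and Chebyshev's inequality together with $\mu_{2,n,c}^{\beta_{n}}(x)\to 0$ drives the contribution to zero). Combining the three pieces produces the stated Voronovskaya formula; the moment bookkeeping is mechanical, whereas the splitting argument for the $\varepsilon$-remainder is the one genuinely analytic step.
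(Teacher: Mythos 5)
Your moment computations agree with the paper's, and the two limits $n\mu_{1,n,c}^{\beta_n}(x)\to x(l+2c)$, $n\mu_{2,n,c}^{\beta_n}(x)\to x(2+cx)$ are exactly what the paper records. Where you diverge is in how the Voronovskaya conclusion is extracted from these limits: the paper does not perform the Taylor-expansion argument at all. Its entire proof consists of stating the three limits $n\mu_{1,n,c}^{\beta_n}\to x(l+2c)$, $n\mu_{2,n,c}^{\beta_n}\to x(2+xc)$, $n\mu_{4,n,c}^{\beta_n}\to 0$ and then invoking Theorem 2.1 of C\'ardenas-Morales, Garrancho and Ra\c{s}a \cite{Cardenas}, a Korovkin-type result that converts precisely these moment asymptotics into the asymptotic formula. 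You instead prove that implication from scratch (Taylor with Peano remainder, localization of the error term), which makes the argument self-contained and forces you to confront the domain issue ($f$ given only on $[0,b]$ while the operator integrates over $[0,\infty)$) that the citation silently absorbs. Both routes are legitimate; yours is longer but exposes where each hypothesis, in particular $n\mu_{4,n,c}^{\beta_n}\to 0$, is actually used.

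One step of your plan outruns what Lemma \ref{jainlemma3} actually provides. Your Cauchy--Schwarz estimate requires $n\sqrt{\mu_{4,n,c}^{\beta_n}(x)}$ to remain bounded, i.e. $\mu_{4,n,c}^{\beta_n}(x)=O(n^{-2})$; but the lemma asserts only an explicit term of order $O(n^{-3})$ plus an unquantified $o(1/n)$ remainder, which gives $n\mu_{4,n,c}^{\beta_n}(x)\to 0$ but not $\mu_{4,n,c}^{\beta_n}(x)=O(n^{-2})$. (The $O(n^{-2})$ bound is in fact true for these operators, but you would have to establish it separately.) The standard repair, which uses only the information the lemma states, is to bound the Peano remainder by $|\varepsilon(t,x)|\le \eta + M\delta^{-2}(t-x)^{2}$, where $\eta>0$ is arbitrary, $\delta$ is chosen accordingly and $M$ bounds $|\varepsilon|$; then $n\,|\mathcal{D}_{n,c}^{\beta_n}(\varepsilon(t,x)(t-x)^{2},x)|\le \eta\, n\mu_{2,n,c}^{\beta_n}(x)+M\delta^{-2}\, n\mu_{4,n,c}^{\beta_n}(x)\to \eta\, x(2+cx)$, which is made arbitrarily small. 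With that substitution your argument closes; this is also, in essence, how the Korovkin-type theorem the paper cites is itself proved.
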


\begin{proof} From Lemma \ref{jainlemma3} we have%
\[
\lim_{n\rightarrow \infty }n\mu _{1,n,c}^{\beta _{n}}=x(l+2c),
\]%
\[
\lim_{n\rightarrow \infty }n\mu _{2,n,c}^{\beta _{n}}=x(2+xc),
\]%
\[
\lim_{n\rightarrow \infty }n\mu _{4,n,c}^{\beta _{n}}=0.
\]

Using Theorem 2.1 from \cite{Cardenas} we get the conclusion.
\end{proof}

\section{King's Approach}
To make the convergence faster in 2003, King \cite{king2003positive} proposed an approach to modify the classical Bernstein polynomial, so that the sequence preserve test functions $e_0$ and $e_2$. After this approach many researcher contributed in this direction. Motivated by such type of operators, we generalized King-type approach of the operators \eqref{jaineq3} as follows:\\
For $\beta \in [0,1)$, $x\in [0,\infty)$ and $f\in C_{\rho_0}(\mathbb{R}^{+})$, we introduced King-type positive linear operators as
\begin{eqnarray}\label{10.4.eq1}
\mathcal{D}_{n,c}^{*\beta}(f,x)&=& \frac{(n-c)}{c} \sum_{v=1}^{\infty} \omega_{\beta}(v,n r_n(x))\int_0^{\infty} p_{n,v-1}(t)f(t) dt +e^{-nr_n(x)}f(0),
\end{eqnarray}
where $\displaystyle r_n(x) = \frac{(n - 2c)(1-\beta)x}{n}$ and $\omega_{\beta}(v,nx)$ and $p_{n,v-1}(t)$ as same as defined in \eqref{jaineq2} and \eqref{jaineq3}.
\begin{lemma}\label{10.4.lemma2}
The following holds
$$ \mathcal{D}_{n,c}^{*\beta}(1,x)= 1, ~~~~ \mathcal{D}_{n,c}^{*\beta}(t,x)=x,$$
$$ \mathcal{D}_{n,c}^{*\beta}(t^2,x)= \frac{n-2 c}{(n-3c)}  x^2 +  \frac{(2-2\beta+\beta^2)}{(n-3c) (1-\beta )^2} x
,\  n>3c,$$
\[
\mathcal{D}_{n,c}^{\ast \beta }(t^{3},x)=\frac{x^{2}n((1-\beta
^{2})(n-4c)x+3(\beta ^{2}-2\beta +2))}{(1-\beta
)^{2}(n-3c)(n-4c)}+o\left( \frac{1}{n}\right) ,\ n>4c,
\]
\[
\mathcal{D}_{n,c}^{\ast \beta }(t^{4},x)=\frac{n^{2}x^{3}((1-\beta
)^{2}(n-6c)x+6(\beta ^{2}-2\beta +2))}{(1-\beta )^{2}(n-3c)(n-4c)(n-5c)}%
+o\left( \frac{1}{n}\right) ,\ n>5c.
\]
\end{lemma}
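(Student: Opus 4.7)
The plan is to reduce Lemma \ref{10.4.lemma2} to Lemma \ref{jainlemma2} by means of a single substitution, so no new series manipulations are needed.

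\textbf{Key observation.} Comparing the definitions \eqref{jaineq3} and \eqref{10.4.eq1}, the King-type operator $\mathcal{D}_{n,c}^{*\beta}$ differs from $\mathcal{D}_{n,c}^{\beta}$ only in that every occurrence of $x$ inside the weight factors $\omega_{\beta}(v,nx)$ and inside the boundary term $e^{-nx}$ has been replaced by $r_n(x)=(n-2c)(1-\beta)x/n$; the integration kernel $p_{n,v-1,c}(t)$ does not depend on $x$ and is untouched. Therefore, for every admissible $f$,
\[
\mathcal{D}_{n,c}^{*\beta}(f,x)=\mathcal{D}_{n,c}^{\beta}\bigl(f,r_n(x)\bigr).
\]
With this identity in hand, each moment formula in Lemma \ref{10.4.lemma2} is obtained by substituting $y=r_n(x)$ into the corresponding moment expression of Lemma \ref{jainlemma2} and simplifying.

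\textbf{Verification step by step.} For $r=0$ the identity $\mathcal{D}_{n,c}^{\beta}(1,\cdot)\equiv 1$ gives the first formula immediately. For $r=1$ the substitution yields
\[
\mathcal{D}_{n,c}^{*\beta}(t,x)=\frac{n\,r_n(x)}{(n-2c)(1-\beta)}=\frac{n}{(n-2c)(1-\beta)}\cdot\frac{(n-2c)(1-\beta)x}{n}=x,
\]
which is precisely the King-type preservation of the identity. For $r=2$, note that $r_n(x)^2=(n-2c)^2(1-\beta)^2 x^2/n^2$; the factor $(1-\beta)^2$ in the denominator of the formula for $\mathcal{D}_{n,c}^{\beta}(t^2,\cdot)$ cancels, the two factors $n^2$ also cancel, leaving the quadratic term $(n-2c)x^2/(n-3c)$, and the analogous cancellation on the linear term produces $(2-2\beta+\beta^2)x/[(n-3c)(1-\beta)^2]$. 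For $r=3,4$ the procedure is identical: substitute $y=r_n(x)$ into the stated expressions, distribute the factors of $r_n(x)^k$ against the denominators $(1-\beta)^4$ or $(1-\beta)^5$ and the numerator powers of $n$, and collect the dominant $x^k$ term. The $o(1/n)$ remainder in Lemma \ref{jainlemma2} transports to an $o(1/n)$ remainder here because $r_n(x)=O(1)\cdot x$ uniformly on compact sets.

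\textbf{Anticipated obstacle.} There is no analytic subtlety, only algebraic bookkeeping of the factors $(n-jc)$ and $(1-\beta)^k$. The only care needed is to check that the lower-order contributions of the binomial expansion of $r_n(x)^k$ (which carry additional negative powers of $n$) can be absorbed into the $o(1/n)$ error term written in Lemma \ref{10.4.lemma2} for the cubic and quartic moments; this is automatic because each such lower-order piece acquires at least one extra factor of $1/n$ through the prefactors $n^k/\prod_j(n-jc)$.
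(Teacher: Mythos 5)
Your proposal is correct and is essentially the paper's own argument: the paper disposes of this lemma with the single line ``the proof follows from Lemma \ref{jainlemma2} and linearity,'' and the identity $\mathcal{D}_{n,c}^{*\beta}(f,x)=\mathcal{D}_{n,c}^{\beta}\bigl(f,r_n(x)\bigr)$ you isolate is exactly the mechanism behind that claim, with your remark on the uniformity of the $o(1/n)$ remainder under the $n$-dependent substitution being, if anything, more careful than the original. (Carrying out your substitution literally also shows that the factor $(1-\beta^{2})$ in the paper's stated third moment should read $(1-\beta)^{2}$, but that is a typo in the lemma as printed, not a defect of your method.)
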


\noindent The proof follows from Lemma \ref{jainlemma2} and linearity of $\mathcal{D}_{n,c}^{*\beta}$. We observe that the  operators $\mathcal{D}_{n,c}^{*\beta}$ are preserves linear functions that means, $\mathcal{D}_{n,c}^{*\beta}(a+bt,x) = a + b x$ for constants $a, b$.

\begin{lemma}\label{lema5}
The following holds
$$\mu _{1,n,c}^{\ast \beta }= \mathcal{D}_{n,c}^{*\beta}(t-x,x) =0,$$
$$\mu _{2,n,c}^{\ast \beta }=\mathcal{D}_{n,c}^{*\beta}((t-x)^2,x)
= \frac{ c}{n-3c}  x^2 +  \frac{(2-2\beta+\beta^2)}{(n-3c)
(1-\beta )^2} x ,\  n>3c ,$$
\[
\mu _{4,n,c}^{\ast \beta
}=\mathcal{D}_{n,c}^{*\beta}((t-x)^4,x)=o\left(
\frac{1}{n}\right),\ n>5c.
\]
\end{lemma}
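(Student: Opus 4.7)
The proof reduces each central moment to the monomial moments supplied by Lemma \ref{10.4.lemma2} via linearity of $\mathcal{D}_{n,c}^{*\beta}$ together with binomial expansion of $(t-x)^k$.

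First, $\mu_{1,n,c}^{*\beta} = \mathcal{D}_{n,c}^{*\beta}(t,x) - x\,\mathcal{D}_{n,c}^{*\beta}(1,x) = x - x = 0$ is immediate; this is precisely the King-preservation property engineered into the choice $r_n(x) = (n-2c)(1-\beta)x/n$. For $\mu_{2,n,c}^{*\beta}$ I would expand $(t-x)^2 = t^2 - 2xt + x^2$; linearity and Lemma \ref{10.4.lemma2} then give $\mu_{2,n,c}^{*\beta} = \mathcal{D}_{n,c}^{*\beta}(t^2, x) - x^2$, and writing $\tfrac{n-2c}{n-3c} - 1 = \tfrac{c}{n-3c}$ produces the claimed expression directly.

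For $\mu_{4,n,c}^{*\beta}$ the cleanest route is to exploit the identity $\mathcal{D}_{n,c}^{*\beta}(f,x) = \mathcal{D}_{n,c}^{\beta}(f, r_n(x))$ and expand around $y := r_n(x)$. With $\eta := x - y = x(n\beta + 2c(1-\beta))/n$, we have $(t-x)^4 = ((t-y)-\eta)^4 = \sum_{k=0}^{4}\binom{4}{k}(t-y)^k(-\eta)^{4-k}$. Applying $\mathcal{D}_{n,c}^{\beta}(\cdot, y)$ termwise yields
\[
\mu_{4,n,c}^{*\beta}(x) = \eta^4 - 4\eta^3\, \mu_{1,n,c}^{\beta}(y) + 6\eta^2\, \mu_{2,n,c}^{\beta}(y) - 4\eta\, \mu_{3,n,c}^{\beta}(y) + \mu_{4,n,c}^{\beta}(y).
\]
A short substitution into the formula for $\mu_{1,n,c}^{\beta}$ from Lemma \ref{jainlemma3} shows $\mu_{1,n,c}^{\beta}(y) = \eta$, so the first two contributions collapse to $-3\eta^4$, and the remaining three are estimated via Lemma \ref{jainlemma3}.

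The main obstacle is the final asymptotic verification: one must expand each factor $(n-kc)^{-1}$ in powers of $1/n$, collect the $1/n$-coefficient from the four surviving terms (most delicately from $-3\eta^4$, where the $\beta^2$-factor in $\eta^2$ is essential), and confirm that their sum is $o(1/n)$. This is straightforward but tedious bookkeeping, and the claimed order is naturally understood in the $\beta \to 0$ regime used in the weighted approximation and Voronovskaja-type theorems elsewhere in the paper.
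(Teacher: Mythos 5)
Your handling of $\mu_{1,n,c}^{\ast\beta}$ and $\mu_{2,n,c}^{\ast\beta}$ is correct and coincides with the paper's route: the paper's entire proof is the single remark that the lemma ``follows from Lemma \ref{10.4.lemma2}'', i.e.\ binomial expansion of $(t-x)^k$ against the raw moments of the King-type operator, and your reduction $\mu_{2,n,c}^{\ast\beta}=\mathcal{D}_{n,c}^{\ast\beta}(t^2,x)-x^2$ with $\tfrac{n-2c}{n-3c}-1=\tfrac{c}{n-3c}$ is exactly that. For the fourth moment you genuinely deviate: instead of expanding against $\mathcal{D}_{n,c}^{\ast\beta}(t^k,x)$ from Lemma \ref{10.4.lemma2}, you use the identity $\mathcal{D}_{n,c}^{\ast\beta}(f,x)=\mathcal{D}_{n,c}^{\beta}(f,r_n(x))$ and expand about $y=r_n(x)$. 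The identity is valid, and your observation $\mu_{1,n,c}^{\beta}(y)=\eta$ is correct, but two things keep this from being a proof.

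First, a small but real gap: Lemma \ref{jainlemma3} records only $\mu_{1,n,c}^{\beta}$, $\mu_{2,n,c}^{\beta}$ and $\mu_{4,n,c}^{\beta}$; the third central moment $\mu_{3,n,c}^{\beta}(y)$ that your expansion needs appears nowhere in the paper and would have to be derived from Lemma \ref{jainlemma2} (whose displayed $t^3$ formula moreover carries a sign error in its leading term, so this derivation needs care). Second, and more seriously, the decisive step is precisely the one you defer. The lemma asserts $\mu_{4,n,c}^{\ast\beta}=o(1/n)$ for a \emph{fixed} $\beta$, and in your decomposition the individual terms are not even $o(1)$ for fixed $\beta$: since $\eta\to\beta x$ and $y\to(1-\beta)x$, one gets $-3\eta^4\to-3\beta^4x^4$, $\mu_{4,n,c}^{\beta}(y)\to\beta^4x^4$, $6\eta^2\mu_{2,n,c}^{\beta}(y)\to 6\beta^4x^4$ and $-4\eta\,\mu_{3,n,c}^{\beta}(y)\to-4\beta^4x^4$. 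The constant terms cancel because $1-4+6-3=0$, and one must then check that the $1/n$ coefficients cancel as well; they do (writing each raw moment as $x^k+\tfrac{1}{n}\cdot\tfrac{k(k-1)}{2}\bigl(cx^k+\tfrac{(1-\beta)^2+1}{(1-\beta)^2}x^{k-1}\bigr)+O(n^{-2})$ and using $\sum_{k}\binom{4}{k}(-1)^{4-k}\tfrac{k(k-1)}{2}=6-12+6=0$ settles it), but your write-up neither performs this cancellation nor identifies it as the crux --- and retreating to the regime $\beta_n\to0$ proves a strictly weaker statement than the lemma claims. To make the proposal complete you must exhibit the order-$1$ and order-$1/n$ cancellations explicitly; the paper's one-line proof faces exactly the same omission, so on this point neither argument is actually written down.
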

The proof follows from Lemma \ref{10.4.lemma2}.

\begin{theorem}
Let $\mathcal{D}_{n,c}^{*\beta_n}, n > 3c $, be defined as in \eqref{10.4.eq1}, where $\displaystyle \lim_{n\to \infty} \beta_n =0$. For any compact $B\subset \mathbb{R}^{+}$ and for each $f\in C_{\rho_0}(\mathbb{R}^+)$ one has
$$ \lim_{n\to \infty} \mathcal{D}_{n,c}^{*\beta_n}(f,x) = f(x),\text{ uniformly in } x\in B.$$
\end{theorem}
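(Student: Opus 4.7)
My plan is to reduce the statement to a Korovkin-type argument on a compact interval $[0,a]$ chosen so that $B\subset[0,a]$, and then to mirror the quantitative estimate already used in Theorem \ref{10.3.1.thm1} but with the King-type moments from Lemma \ref{lema5}. The key observation is that the operators $\mathcal{D}_{n,c}^{*\beta}$ preserve both $e_0$ and $e_1$, so $\mu_{1,n,c}^{*\beta_n}\equiv 0$, and only the second-order moment has to be controlled. From Lemma \ref{10.4.lemma2}, the coefficient $\frac{n-2c}{n-3c}$ tends to $1$ and $\frac{2-2\beta_n+\beta_n^2}{(n-3c)(1-\beta_n)^2}$ tends to $0$ under the hypothesis $\beta_n\to 0$, so $\mathcal{D}_{n,c}^{*\beta_n}(t^2,x)\to x^2$ uniformly on $[0,a]$; this handles the third Korovkin test function.

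For the growth of $f\in C_{\rho_0}(\mathbb{R}^+)$ outside $B$, I would repeat verbatim the splitting argument used at the beginning of the proof of Theorem \ref{10.3.1.thm1}: for $x\in[0,a]$ and $t\geq 0$ one has
\begin{equation*}
|f(t)-f(x)|\leq 6M_f(1+a^2)(t-x)^2+\Bigl(1+\tfrac{|t-x|}{\delta}\Bigr)\omega_{a+1}(f,\delta).
\end{equation*}
Applying the positive linear operator $\mathcal{D}_{n,c}^{*\beta_n}$, using $\mathcal{D}_{n,c}^{*\beta_n}(1,x)=1$, invoking the Cauchy--Schwarz inequality on the linear term, and inserting the bound from Lemma \ref{lema5} yields
\begin{equation*}
\|\mathcal{D}_{n,c}^{*\beta_n}(f)-f\|_{C[0,a]}\leq 6M_f(1+a^2)\,\mu_{2,n,c}^{*\beta_n}(x)+2\omega_{a+1}\bigl(f,\sqrt{\mu_{2,n,c}^{*\beta_n}(x)}\bigr),
\end{equation*}
after the choice $\delta=\sqrt{\mu_{2,n,c}^{*\beta_n}(x)}$ as in the earlier theorem.

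Finally, from the explicit form $\mu_{2,n,c}^{*\beta_n}(x)=\frac{c}{n-3c}x^2+\frac{2-2\beta_n+\beta_n^2}{(n-3c)(1-\beta_n)^2}x$ one sees that $\sup_{x\in[0,a]}\mu_{2,n,c}^{*\beta_n}(x)\to 0$ as $n\to\infty$. Since $f$ restricted to $[0,a+1]$ is uniformly continuous, $\omega_{a+1}(f,\delta)\to 0$ as $\delta\to 0^+$, so both terms on the right vanish uniformly in $x\in[0,a]\supset B$. I do not anticipate a serious obstacle: the work is essentially bookkeeping, and the only mildly delicate point is confirming that the moment improvement provided by the King modification (in particular $\mu_{1,n,c}^{*\beta_n}=0$) lets the Theorem \ref{10.3.1.thm1} template run without the extra first-order correction term that appeared there.
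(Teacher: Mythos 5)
Your proof is correct, but it is not the route the paper takes: the paper disposes of this theorem in one line, citing Lemma \ref{10.4.lemma2} for the test functions $e_0,e_1,e_2$ and then invoking the classical Korovkin theorem, whereas you rerun the quantitative machinery of Theorem \ref{10.3.1.thm1} with the King-type moments from Lemma \ref{lema5} and let the explicit bound $6M_f(1+a^2)\mu_{2,n,c}^{*\beta_n}(x)+2\omega_{a+1}\bigl(f,\sqrt{\mu_{2,n,c}^{*\beta_n}(x)}\bigr)$ go to zero. Your version buys two things: a rate of convergence rather than bare convergence, and --- more importantly --- a correct treatment of the fact that $f\in C_{\rho_0}(\mathbb{R}^+)$ is only polynomially bounded and the operator samples $f$ on all of $[0,\infty)$, so the Korovkin theorem on $C[0,a]$ does not apply verbatim; your splitting at $t=a+1$ is exactly the tail control that the paper's one-line citation glosses over. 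The paper's approach buys brevity and generality (no growth estimate needed when the argument is packaged into a weighted Korovkin-type statement such as the one from \cite{gadjiev1976theorems} used in Theorem \ref{6.5.thm5.1}). One small inaccuracy in your closing remark: Theorem \ref{10.3.1.thm1} never contained a first-order correction term (that term appears only in the $K$-functional estimate of the first direct theorem, via the auxiliary operator $\hat{\mathcal{D}}_{n,c}^{\beta}$), so the vanishing of $\mu_{1,n,c}^{*\beta_n}$ is not what makes the template run here --- the Cauchy--Schwarz step already reduces everything to the second moment; it does not affect the validity of your argument.
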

The proof follows from Lemma \ref{10.4.lemma2} and Korovkin theorem \cite{korovkin1953convergence}.

\begin{theorem}\label{10.4.thm2}
 For $f\in C_B[0,\infty)$, $0<\beta<1$ and $n>3 $, we have
 $$| \mathcal{D}_{n,c}^{*\beta}(f,x) -f(x) | \leq C_1\omega_2\left(f, \sqrt{\hat{\mu}_{n,2,c}^\beta}\right),$$
 where $C_1$ is a positive constant.
\end{theorem}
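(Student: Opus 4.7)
The plan is to run the standard Ditzian-Totik style estimate via the K-functional $\mathcal{K}_2$, exploiting the crucial advantage that the King-type operator $\mathcal{D}_{n,c}^{*\beta}$ preserves linear functions. By Lemma \ref{lema5} we have $\mu_{1,n,c}^{*\beta}=0$, so, unlike in the proof of the first direct theorem, no auxiliary Popoviciu-type shifted operator is needed; the argument simplifies considerably.

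First I would fix $g\in W^2$ and expand it by Taylor's formula around $x$:
\[
g(t)=g(x)+(t-x)g'(x)+\int_{x}^{t}(t-u)g''(u)\,du.
\]
Applying $\mathcal{D}_{n,c}^{*\beta}$ and using $\mathcal{D}_{n,c}^{*\beta}(1,x)=1$ and $\mathcal{D}_{n,c}^{*\beta}(t-x,x)=0$ from Lemma \ref{lema5}, the zeroth and first order terms disappear, leaving
\[
\mathcal{D}_{n,c}^{*\beta}(g,x)-g(x)=\mathcal{D}_{n,c}^{*\beta}\!\left(\int_{x}^{t}(t-u)g''(u)\,du,\,x\right).
\]
Bounding the inner integral pointwise by $\tfrac{1}{2}\|g''\|(t-x)^{2}$ and then applying $\mathcal{D}_{n,c}^{*\beta}$ to $(t-x)^{2}$ yields $|\mathcal{D}_{n,c}^{*\beta}(g,x)-g(x)|\leq \tfrac{1}{2}\mu_{2,n,c}^{*\beta}\,\|g''\|$.

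Next, for a general $f\in C_{B}[0,\infty)$ I would pick any $g\in W^{2}$ and use the triangle inequality together with the contraction property $\|\mathcal{D}_{n,c}^{*\beta}(f)\|\leq \|f\|$ (which follows directly from positivity and $\mathcal{D}_{n,c}^{*\beta}(1,x)=1$, exactly as in the boundedness estimate shown earlier in the excerpt):
\[
|\mathcal{D}_{n,c}^{*\beta}(f,x)-f(x)|\leq |\mathcal{D}_{n,c}^{*\beta}(f-g,x)|+|f(x)-g(x)|+|\mathcal{D}_{n,c}^{*\beta}(g,x)-g(x)|\leq 2\|f-g\|+\tfrac{1}{2}\mu_{2,n,c}^{*\beta}\|g''\|.
\]
Taking the infimum over $g\in W^{2}$ produces $|\mathcal{D}_{n,c}^{*\beta}(f,x)-f(x)|\leq 2\,\mathcal{K}_{2}\!\bigl(f,\tfrac{1}{4}\mu_{2,n,c}^{*\beta}\bigr)$, and combining this with the standard estimate \eqref{9.3.eq1} of DeVore-Lorenz gives the claimed bound with $C_{1}=2M$ (the constant identified with $\hat{\mu}_{n,2,c}^{\beta}=\mu_{2,n,c}^{*\beta}$, up to the harmless factor $1/4$ that can be absorbed into $C_{1}$ using $\omega_{2}(f,\lambda\delta)\leq (1+\lambda)^{2}\omega_{2}(f,\delta)$).

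There is essentially no hard step here; the whole proof is a routine K-functional argument. The only thing one has to verify carefully is that $\mathcal{D}_{n,c}^{*\beta}$ really is a contraction on $C_{B}[0,\infty)$, which is immediate from its positivity and the identity $\mathcal{D}_{n,c}^{*\beta}(1,x)=1$ in Lemma \ref{10.4.lemma2}. The main payoff of the King-type modification is precisely what drives the simplification: the vanishing of $\mu_{1,n,c}^{*\beta}$ eliminates the extra $\omega(f,\mu_{1})$ term that appears in the direct theorem for $\mathcal{D}_{n,c}^{\beta}$ and leaves a clean $\omega_{2}$-estimate with a single argument $\sqrt{\mu_{2,n,c}^{*\beta}}$.
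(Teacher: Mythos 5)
Your proposal is correct and follows essentially the same route as the paper: Taylor expansion of $g\in W^2$, cancellation of the first-order term via $\mathcal{D}_{n,c}^{*\beta}(t-x,x)=0$, the bound $|\mathcal{D}_{n,c}^{*\beta}(g,x)-g(x)|\leq \mathrm{const}\cdot\mu_{2,n,c}^{*\beta}\|g''\|$, the contraction property, the triangle inequality, and finally the infimum over $g$ combined with \eqref{9.3.eq1}. The only cosmetic difference is your sharper factor $\tfrac12$ in the integral-remainder bound, which, as you note, is absorbed into the constant.
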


\begin{proof}
Let $g\in W_{\infty}^2$ and $x\in [0,\infty)$. Using Taylor's expansion
$$ g(t) = g(x) + g'(x) (t-x) + \int_x^t (t-u)g''(u) du, ~~t\in [0,\infty)$$
and Lemma \ref{10.4.lemma2}, we have

$$\mathcal{D}_{n,c}^{*\beta}(g,x) -g(x) = \mathcal{D}_{n}^{*\beta}\left( \int_x^t (t-u)g''(u) du\right).$$
Notice that, $\displaystyle \bigg|\int_x^t (t-u)g''(u) du \bigg|\leq (t-x)^2\|g''\|$. Thus
$$|\mathcal{D}_{n,c}^{*\beta}(g,x) -g(x)| \leq \mathcal{D}_{n,c}^{*\beta}((t-x)^2,x) \|g''(x)\| =\mu _{2,n,c}^{\ast \beta } \|g''\|.$$

%\frac{ c}{n-3 c} x^2 + \frac{x \left(2-2 \beta +\beta ^2\right)}{(n- 3 c) (1-\beta )^2}
%From $0<\beta<1$, we have  $0<1-\beta<1$. Therefore
%\begin{eqnarray*}
%|\mathcal{D}_{n}^{*\beta}(g,x) -g(x)| &\leq& \frac{x^2(1-\beta)^2+ x \left(1+ (1-\beta)^2\right)}{(n-3) (1-\beta )^2}\|g''\|\\
%&&\leq \frac{x^2+ 2x}{(n-3) (1-\beta )^2}\|g''\|.
%\end{eqnarray*}
Since $|\mathcal{D}_{n,c}^{*\beta}(f,x)|\leq \|f\|$,

\begin{eqnarray*}
|\mathcal{D}_{n,c}^{*\beta}(f,x)-f(x)| &\leq& |\mathcal{D}_{n,c}^{*\beta}(f-g,x)-(f-g)(x)| +
|\mathcal{D}_{n,c}^{*\beta}(g,x)-g(x)|\\
&\leq & 2\|f-g\| + %\frac{x^2+ 2x}{(n-3) (1-\beta )^2}
\mu _{2,n,c}^{\ast \beta } \|g''\|.
\end{eqnarray*}
Finally taking the infimum   on right side over all $g\in
W_{\infty}^2$ and using \eqref{9.3.eq1}, we get the desired
result.
\end{proof}

Now, we establish the Voronovskaja type asymptotic formula for the operators $\mathcal{D}_{n,c}^{*\beta}$:

\begin{theorem}
Let $b>0$ and $\beta _{n}\in (0,1)$ such that $n\beta
_{n}\rightarrow 0$ as $\rightarrow \infty .$

Then for every $f\in C[0,b],$ twice differentiable at a fixed
point $x\in (0,b),$ we have
$$\lim_{n\rightarrow
\infty}n\big(\mathcal{D}_{n,c}^{*\beta_n}(f,x)-f(x)\big)=x(2+x
c)\frac{f''(x)}{2}.$$
\end{theorem}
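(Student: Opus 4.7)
The plan is to apply the quantitative Voronovskaja framework of Theorem 2.1 in \cite{Cardenas}, exactly as in the proof of the preceding Voronovskaja statement for $\mathcal{D}_{n,c}^{\beta_n}$. That theorem reduces the asymptotic to computing $\lim n\mu_{1,n,c}^{*\beta_n}$ and $\lim n\mu_{2,n,c}^{*\beta_n}$, together with the decay condition $\lim n\mu_{4,n,c}^{*\beta_n}=0$, and then yields the limit formula $f'(x)\lim n\mu_{1}^{*} + \tfrac{1}{2}f''(x)\lim n\mu_{2}^{*}$.

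All three limits are read off directly from Lemma \ref{lema5}. The first moment is the defining feature of the King-type modification: $\mu_{1,n,c}^{*\beta_n}\equiv 0$, so $n\mu_{1,n,c}^{*\beta_n}\to 0$ and the $f'(x)$ term of the Voronovskaja formula disappears (this is exactly why the limit here is simpler than in the previous theorem, which had an $x(l+2c)f'(x)$ term driven by $n\beta_n\to l$). For the second moment,
$$n\mu_{2,n,c}^{*\beta_n} \;=\; \frac{nc}{n-3c}\,x^2 \;+\; \frac{n(2-2\beta_n+\beta_n^2)}{(n-3c)(1-\beta_n)^2}\,x \;\longrightarrow\; cx^2+2x \;=\; x(2+cx),$$
using only $\beta_n\to 0$. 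The fourth moment bound from Lemma \ref{lema5} is $\mu_{4,n,c}^{*\beta_n}=o(1/n)$, hence $n\mu_{4,n,c}^{*\beta_n}\to 0$, which supplies the remainder control required by the Cardenas theorem. Substituting the three limits into that theorem gives
$$\lim_{n\to\infty} n\bigl(\mathcal{D}_{n,c}^{*\beta_n}(f,x)-f(x)\bigr) \;=\; f'(x)\cdot 0 \;+\; \frac{f''(x)}{2}\cdot x(2+cx),$$
which is the stated formula.

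If one wanted a self-contained proof rather than citing \cite{Cardenas}, the natural route is the Taylor expansion $f(t)=f(x)+f'(x)(t-x)+\tfrac{1}{2}f''(x)(t-x)^{2}+h(t,x)(t-x)^{2}$ at the point of twice-differentiability, with $h(t,x)\to 0$ as $t\to x$ and $h$ bounded. Applying $\mathcal{D}_{n,c}^{*\beta_n}$ and using $\mu_{1,n,c}^{*\beta_n}=0$ together with $n\mu_{2,n,c}^{*\beta_n}\to x(2+cx)$ handles the polynomial part immediately; everything then reduces to proving $n\mathcal{D}_{n,c}^{*\beta_n}(h(t,x)(t-x)^{2},x)\to 0$. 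The main obstacle is that $f$ is only hypothesized on $[0,b]$ while the operator integrates over $[0,\infty)$, so one has to split at some $\delta$-neighbourhood of $x$, bound $h$ by $\varepsilon$ on the inner piece (controlled by $n\mu_{2}^{*}$) and use Cauchy-Schwarz on the outer piece combined with the $o(1/n)$ bound on $\mu_{4,n,c}^{*\beta_n}$ and a suitable growth estimate on the tails of the $\omega_{\beta_n}\,p_{n,v-1,c}$ kernel. This is precisely the tail-handling step that the Cardenas theorem packages and is where the hypothesis $n\beta_n\to 0$ (rather than merely $\beta_n\to 0$) enters.
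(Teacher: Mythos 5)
Your proof is correct and follows exactly the paper's own argument: both read the limits $\lim_n n\mu_{1,n,c}^{*\beta_n}=0$, $\lim_n n\mu_{2,n,c}^{*\beta_n}=x(2+cx)$, and $\lim_n n\mu_{4,n,c}^{*\beta_n}=0$ off Lemma \ref{lema5} and then invoke Theorem 2.1 of \cite{Cardenas}. The extra sketch of a self-contained Taylor-expansion argument is a reasonable addition but not part of the paper's (very brief) proof.
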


\begin{proof}
From Lemma \ref{lema5} we have%
\[
\lim_{n\rightarrow \infty }n\mu _{1,n,c}^{\ast \beta _{n}}=0,
\]%
\[
\lim_{n\rightarrow \infty }n\mu _{2,n,c}^{\ast \beta
_{n}}=x(2+xc),
\]%
\[
\lim_{n\rightarrow \infty }n\mu _{4,n,c}^{\ast \beta _{n}}=0.
\]

Using Theorem 2.1 from \cite{Cardenas} we get the conclusion.
\end{proof}

\textbf{References:}
\bibliographystyle{amsplain}

\end{document}